\documentclass[reqno]{amsart}
\usepackage{amsfonts}
\usepackage{amssymb}
\usepackage{amsmath}
\usepackage{mathrsfs}
\usepackage{graphicx}
\usepackage{amsmath,amsthm,amsfonts,amssymb,latexsym,epsfig}
\usepackage{hyperref}

\newtheorem{theorem}{Theorem}[section]

\newtheorem{lemma}{Lemma}[section]

\newtheorem{prob}{Problem}[section]

\newtheorem{rem}{Remark}[section]

\numberwithin{equation}{section}

\theoremstyle{remark}

\begin{document}

\title{On the circumradius of a special class of $n$-simplices}
\author[Y.-D. Wu]{Yu-Dong Wu}
\address[Y.-D. Wu]{Department of Mathematics, Zhejiang Xinchang High School, Shaoxing, Zhejiang 312500, People's Republic of China}\email{yudong.wu@hotmail.com}
\author[Zh.-H. Zhang]{Zhi-Hua Zhang}
\address[Zh.-H. Zhang]{Department of Mathematics, Shili Senior High School in Zixing, Chenzhou, Hunan 423400, People's Republic of China}\email{zxzh1234@163.com}
\dedicatory{}

\begin{abstract}
An $n$-simplex is called circumscriptible (or edge-incentric) if
there is a sphere tangent to all its $n(n+1)/2$ edges. We obtain a
closed formula for the radius of the circumscribed sphere of the
circumscriptible $n$-simplex, and also prove a double inequality
involving the circumradius and the edge-inradius of such simplices.
Among this inequality settles affirmatively a part of a problem
posed by the authors.
\end{abstract}

\keywords{Circumscriptible $n$-simplex, inequality, edge-tangent
sphere, circumradius, edge-inradius} \subjclass[2000]{52A40, 52B12,
52B11}

\maketitle



\section{Introduction}

A (non-degenerate) $n$-simplex $\Omega = [A_0,A_1, . . .,A_{n}], n
\geq 1$, is defined as the convex hull of $n + 1$ affinely
independent points (or position vectors) $A_0,A_1, . . .,A_n$ in
Euclidean $n$-space. The points $A_0,A_1, . . .,A_n$ are the
vertices of $\Omega$, and the line segments $a_{ij}$ joining two
different vertices $A_i$ and $A_j$ are its edges.
\par
Every $n$-simplex has a circumscribed sphere passing through its
$n+1$ vertices and an inscribed sphere tangent to each of its $n+1$
facets. For the circumradius $R$ and the inradius $r$, we have the
celebrated Euler's inequality as follows
\begin{equation}\label{eq001}
 R\geq nr.
\end{equation}

An $n$-simplex is circumscriptible (or edge-incentric) if there is a
sphere tangent to all its $n(n+1)/2$ edges. Considering such a
simplex, we call this the edge-tangent sphere of the $n$-simplex,
and note $\rho$ as the edge-inradius of this sphere. Of course, not
every $n$-simplex $(n\geq 3)$ has an edge-tangent sphere. However,
we have the following sufficient and necessary condition given by
Lin and Zhu \cite{lzc01} (see also Hajja \cite[p.~242, Theorem
4.1]{hajja01}).
\begin{theorem}\label{t01}
The $n$-simplex $\Omega$ has an edge-tangent sphere if and only if
there exist {\rm(}i.e., so-called the balloon radii{\rm)} $x_{i}>0$
with $0\leq i\leq n$ satisfying $a_{ij}=x_{i}+x_{j}$ for $0\leq i<
j\leq n$ or
$$x_i=\frac{1}{n(n-1)}\left(n\sum_{\substack{i=0\\(i\neq j)}}^{n}a_{ij}-\sum_{0\leq i<j\leq n}a_{ij}\right).$$
\end{theorem}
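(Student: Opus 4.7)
The plan is to prove the equivalence in both directions and then derive the closed form for $x_i$, treating the two implications separately and closing with a short arithmetic calculation.

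\emph{Necessity.} Suppose the edge-tangent sphere exists with center $I$ and radius $\rho$. Every edge $a_{ij}$ is, by hypothesis, a tangent line to this sphere, so for each fixed vertex $A_i$ the tangent length
$$x_i := \sqrt{|IA_i|^2 - \rho^2}$$
is the same for every $j$, being an intrinsic invariant of the pair $(A_i,\text{sphere})$. Writing $T_{ij}$ for the point of tangency on edge $a_{ij}$, one has $|A_iT_{ij}| = x_i$ and $|A_jT_{ij}| = x_j$, so that $a_{ij} = x_i + x_j$.

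\emph{Sufficiency.} Given positive $x_0,\dots,x_n$ with $a_{ij} = x_i + x_j$, I would first mark the candidate contact point $T_{ij}$ on each edge at distance $x_i$ from $A_i$; this is interior to the edge since $x_i,x_j>0$. The desired center $I$ must satisfy $|IA_k|^2 - x_k^2 = \rho^2$ for all $k$, and subtracting the case $k=0$ reduces this to the linear system
$$(I-A_0)\cdot(A_k-A_0) = x_0\, a_{0k}, \qquad k=1,\dots,n,$$
which has a unique solution because the vectors $A_k-A_0$ are linearly independent. Setting $\rho^2 := |IA_0|^2 - x_0^2$, a direct calculation of $(I-A_0)\cdot(T_{0k}-A_0) = x_0^2 = |T_{0k}-A_0|^2$ shows $I-T_{0k}$ is orthogonal to $A_0A_k$ and $|IT_{0k}|=\rho$, so the sphere about $I$ of radius $\rho$ is tangent to every edge through $A_0$. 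For an arbitrary edge $a_{ij}$ the same conclusion then follows from the telescoping identity $|IA_i|^2 - |IA_j|^2 = (x_i^2-x_0^2)-(x_j^2-x_0^2) = x_i^2 - x_j^2$.

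\emph{Closed formula.} Sum the relation $a_{ij}=x_i+x_j$ two different ways. With $S := \sum_{k=0}^n x_k$, each $x_k$ appears in exactly $n$ pairs, so
$$\sum_{0\le i<j\le n} a_{ij} = nS, \qquad \sum_{j\neq i} a_{ij} = (n-1)x_i + S,$$
and eliminating $S$ gives the stated expression for $x_i$.

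\emph{Main obstacle.} The substantive step is sufficiency: confirming that the $\binom{n+1}{2}$ edge-tangency conditions collapse to the $n$ linear equations above and that the resulting $\rho^2$ is actually positive. I expect positivity to follow because $I = T_{0k}$ for any $k$ would force $I$ to lie on every edge through $A_0$ and hence to equal $A_0$, contradicting $x_0>0$; and the orthogonality of $IT_{ij}$ to the edge should fall out of the dot-product equations rather than being an independent hypothesis.
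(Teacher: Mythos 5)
Your proof is correct, and it is the standard tangent-length argument: the paper itself states Theorem 1.1 without proof, importing it from Lin--Zhu and Hajja, and those sources argue exactly as you do (equal tangent lengths from each vertex for necessity; for sufficiency, the linear system $(I-A_0)\cdot(A_k-A_0)=x_0a_{0k}$, whose solution automatically satisfies $(I-A_i)\cdot(A_j-A_i)=x_ia_{ij}$ for every pair and hence gives $\rho^2=|IT_{ij}|^2\ge 0$ with equality excluded since two edges through a vertex meet only at that vertex). The only point worth tightening is the necessity of $x_i>0$: if $A_i$ lay on the sphere, every edge through $A_i$ would be tangent at $A_i$ itself and so lie in the tangent hyperplane there, contradicting non-degeneracy of the simplex.
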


In 2006, Hajja \cite{hajja01} derived many geometrical properties of
the circumscriptible $n$-simplex. He also proved a closed formula
involving the edge-inradius (i.e., the radius of the edge-tangent
sphere):
\begin{theorem}\label{t02}{\rm(\cite[p.~249, Theorm 7.2 (d)]{hajja01})}
The edge-inradius $\rho$ of $\Omega$ is given by
\begin{equation}\label{02}
{\rho}^2=\frac{2(n-1)}{\left(\sum_{i=0}^{n}\limits
{\frac{1}{x_{i}}}\right)^2-(n-1)\sum_{i=0}^{n}\limits{\frac{1}{x_{i}^2}}}.
\end{equation}
\end{theorem}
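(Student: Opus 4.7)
The plan is to exploit the fact that the edge-incenter $I$ (the center of the edge-tangent sphere) lies at distance $\rho$ from every edge of $\Omega$. First, I would record the elementary consequence of Theorem \ref{t01}: if $T_{ij}$ denotes the tangent point on edge $A_iA_j$, then $|A_iT_{ij}|=x_i$ while $IT_{ij}$ is perpendicular to $A_iA_j$ and has length $\rho$, so by the Pythagorean theorem
\[
|IA_i|^{2} = x_i^{2}+\rho^{2}, \qquad i=0,1,\ldots,n.
\]

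Next, I would introduce the vectors $v_i := \overrightarrow{IA_i}\in\mathbb{R}^{n}$ and assemble their Gram matrix $G = (v_i\cdot v_j)_{i,j=0}^{n}$. Combining the identity $|v_i - v_j|^{2} = a_{ij}^{2} = (x_i+x_j)^{2}$ with the distances above yields
\[
v_i\cdot v_j = \rho^{2} - x_i x_j \quad (i\neq j), \qquad v_i\cdot v_i = \rho^{2} + x_i^{2}.
\]

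The crux of the argument is that $v_0,\ldots,v_n$ are $n+1$ vectors living in an $n$-dimensional space, hence are linearly dependent, so $\det G = 0$. To unpack this condition, I would write
\[
G \;=\; 2\,\mathrm{diag}(x_0^{2},\ldots,x_n^{2}) + \rho^{2}\mathbf{1}\mathbf{1}^{T} - \mathbf{x}\mathbf{x}^{T},
\]
with $\mathbf{1}=(1,\ldots,1)^{T}$ and $\mathbf{x}=(x_0,\ldots,x_n)^{T}$, and then apply the matrix determinant lemma for a rank-two update of a diagonal matrix. This reduces $\det G = 0$ to the vanishing of a $2\times 2$ determinant whose entries depend only on $\rho$, $\sum_i 1/x_i$, and $\sum_i 1/x_i^{2}$. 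Since that $2\times 2$ equation is linear in $\rho^{2}$, solving it directly produces the asserted closed form.

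The main potential obstacle is bookkeeping in the rank-two expansion, but no hidden difficulty is expected: the antidiagonal entries of the reduced $2\times 2$ matrix differ only in sign, so their product contributes with the opposite sign from the diagonal product, and the terms arrange into an equation of the form $\rho^{2}\bigl[\bigl(\sum 1/x_i\bigr)^{2} - (n-1)\sum 1/x_i^{2}\bigr] = 2(n-1)$, which is precisely the target identity.
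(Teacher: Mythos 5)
Your argument is correct, and it is genuinely different from the route the paper takes: the paper offers no proof of Theorem \ref{t02} at all (it is quoted from \cite{hajja01}), and the derivation it points to goes through the determinant quotient ${\rho}^2=-|A|/(2|A_1|)$ of Theorem \ref{t03} combined with the explicit $|A|$ and $A^{-1}$ of Lemma \ref{l02}, since the bordered determinant satisfies $|A_1|=-|A|\,\mathbf{1}^{T}A^{-1}\mathbf{1}$ and $\mathbf{1}^{T}A^{-1}\mathbf{1}=\tfrac{1}{4(n-1)}\bigl[P^2-(n-1)Q\bigr]$. Your Gram-matrix argument is self-contained: the tangent-length identity $|IA_i|^2=x_i^2+\rho^2$ and the polarization step do give $v_i\cdot v_j=\rho^2-x_ix_j$ and $v_i\cdot v_i=\rho^2+x_i^2$, the singularity of the $(n+1)\times(n+1)$ Gram matrix of $n+1$ vectors in $\mathbb{R}^n$ is the right degeneracy condition, and the rank-two determinant lemma applied to $G=2\,\mathrm{diag}(x_i^2)+\rho^2\mathbf{1}\mathbf{1}^{T}-\mathbf{x}\mathbf{x}^{T}$ (using $\mathbf{1}^{T}\mathrm{diag}(2x_i^2)^{-1}\mathbf{1}=Q/2$, $\mathbf{1}^{T}\mathrm{diag}(2x_i^2)^{-1}\mathbf{x}=P/2$, $\mathbf{x}^{T}\mathrm{diag}(2x_i^2)^{-1}\mathbf{x}=(n+1)/2$) reduces $\det G=0$ to $\rho^2\bigl[P^2-(n-1)Q\bigr]=2(n-1)$, exactly as you claim. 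In fact your $G$ equals $\rho^2\mathbf{1}\mathbf{1}^{T}-\tfrac12 A$, so the two computations are two faces of the same identity; what your version buys is independence from the cited Theorem \ref{t03} and from the inverse formula \eqref{281}, at the cost of redoing a small amount of linear algebra that the paper needs anyway for Lemma \ref{l03}. The only cosmetic inaccuracy is your parenthetical remark that the antidiagonal entries of the reduced $2\times2$ matrix ``differ only in sign''; with the natural factorization they are $-P/2$ and $\rho^2P/2$, but this does not affect the outcome.
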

The original for Theorem \ref{t02} is based on the following
generalized formula of the edge-inradius of a circumscriptible
$n$-simplex in terms of its edge-lengths given by Ivanoff
\cite{ivan01}, Lin and Zhu \cite{lzc01}.
\begin{theorem}\label{t03}
Given a circumscriptible $n$-simplex $\Omega$, we have
\begin{equation*}
{\rho}^2=-\frac{|A|}{2|A_1|},
\end{equation*}
where
$$A=\left(%
\begin{array}{cccc}
  -2x_{0}^2   & 2x_{0}x_{1} &\cdots  & 2x_{0}x_{n}\\
  2x_{0}x_{1} & -2x_{1}^2   &\cdots  & 2x_{1}x_{n}\\
  \cdots &\cdots &\cdots  &\cdots \\
  2x_{0}x_{n} & 2x_{1}x_{n} &\cdots  & -2x_{n}^2\\
\end{array}%
\right),$$ and
$$A_{1}=\left(%
\begin{array}{cccc}
  0 & 1 & \cdots & 1\\
  1 & \cdot  & \cdots & \cdot\\
  \vdots & \vdots & A  & \vdots\\
  1 & \cdot  & \cdots & \cdot\\
\end{array}%
\right).$$
\end{theorem}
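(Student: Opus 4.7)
The plan is to derive the formula by applying the Cayley--Menger identity to the degenerate configuration obtained from the $n+1$ vertices together with the edge-incenter $I$. The starting observation is purely geometric: since the edge-tangent sphere touches $A_iA_j$ at the point at distance $x_i$ from $A_i$, every tangent segment from $A_i$ to the sphere has length $x_i$, and the Pythagorean theorem applied at the foot of the perpendicular from $I$ to that edge then gives
\begin{equation*}
|IA_i|^2 = x_i^2 + \rho^2 \qquad (i=0,1,\ldots,n).
\end{equation*}
Because the $n+2$ points $A_0,\ldots,A_n,I$ all lie in $\mathbb{R}^n$, they are affinely dependent, hence the Cayley--Menger determinant of order $n+3$ built from the edge lengths $a_{ij}$ together with the distances $|IA_i|$ must vanish. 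This single degenerate-volume equation is the only input from which I plan to extract the formula.

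I would then perform two successive symmetric row-and-column reductions on this $(n+3)\times(n+3)$ determinant, each of which preserves its value. The first reduction subtracts $\rho^2$ times the border row (resp.\ border column) from the last row (resp.\ last column); this turns every entry $x_i^2+\rho^2$ in the last row and column into $x_i^2$, while leaving $-2\rho^2$ at the bottom-right corner. The second reduction uses the balloon-radius identity $a_{ij}^2=(x_i+x_j)^2$: for $k=1,\ldots,n+1$ subtract $x_{k-1}^2$ times the border row (and border column) from row $k$ (and column $k$). A direct computation using $a_{ij}^2-x_i^2-x_j^2=2x_ix_j$ for $i\neq j$ (and the diagonal case $-x_i^2-x_i^2=-2x_i^2$) shows that the central $(n+1)\times(n+1)$ block transforms exactly into the matrix $A$ of the statement, while the $x_i^2$ entries in the last row and last column collapse to zero.

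After these reductions the matrix has a transparent block structure: its top-left $(n+2)\times(n+2)$ corner is exactly $A_1$, and the last row and last column are zero except for a $1$ in each border position and $-2\rho^2$ at the very corner. Expanding along the last row now picks up only two nonzero terms: the cofactor of $-2\rho^2$ is $|A_1|$, and a further expansion of the other cofactor along its column of zeros produces $-|A|$ once the two sign factors $(-1)^{n+2}$ and $(-1)^{n+1}$ are accounted for. Setting the resulting sum to zero yields $-|A|-2\rho^2|A_1|=0$, and the formula $\rho^2=-|A|/(2|A_1|)$ follows. The only real obstacle I anticipate is the sign bookkeeping in the nested cofactor expansion; everything else is a mechanical consequence of the Cayley--Menger setup combined with the identity $|IA_i|^2=x_i^2+\rho^2$.
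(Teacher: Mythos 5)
Your proof is correct, and it is worth noting that the paper itself offers no proof of this statement at all: Theorem \ref{t03} is quoted there as a known result of Ivanoff and of Lin--Zhu, with only a citation. Your argument is therefore a genuine, self-contained derivation rather than a variant of anything in the text. The two inputs you use are both sound: the tangent-length identity $|IA_i|^2=x_i^2+\rho^2$ follows from the power of the point $A_i$ with respect to the edge-tangent sphere together with the fact that the tangent length from $A_i$ along every edge $A_iA_j$ equals the balloon radius $x_i$; and the vanishing of the Cayley--Menger determinant of the $n+2$ points $A_0,\dots,A_n,I$ in $\mathbb{R}^n$ is the standard affine-dependence criterion. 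I checked the two symmetric reductions: subtracting $\rho^2$ times the border row and column from the $I$-row and $I$-column turns the corner entry into $-2\rho^2$ and the adjacent entries into $x_i^2$, and the subsequent subtraction of $x_{k-1}^2$ times the border converts the central block entries via $(x_i+x_j)^2-x_i^2-x_j^2=2x_ix_j$ and $0-2x_i^2=-2x_i^2$ into exactly the matrix $A$, while annihilating the $x_i^2$ entries in the last row and column. The final expansion gives the cofactor $|A_1|$ for the corner entry $-2\rho^2$ and, after the nested expansion with signs $(-1)^{n+2}$ and $(-1)^{n+1}$, the contribution $-|A|$ from the remaining $1$, so the vanishing of the determinant reads $-|A|-2\rho^2|A_1|=0$, which is the claimed formula. (As a sanity check, for $n=2$ with $x_0=x_1=x_2=1$ one gets $|A|=32$, $|A_1|=-48$, hence $\rho^2=1/3$, matching the inradius of the equilateral triangle of side $2$.) Your approach has the additional merit of making transparent why the bordered matrix $A_1$ appears: it is the residue of the Cayley--Menger border after the incenter's row and column have been cleared.
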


For the circumradius $R$ of a circumscriptible $n$-simplex $\Omega$,
in 2007, an interesting problem stems naturally from the above
investigations and the following Theorem \ref{t04} by Hajja
{\rm\cite[p.~261]{hajja01}}: Finding a closed formula for the
circumradius of a circumscriptible $n$-simplex in terms of its
balloon radii $x_i$ with $0\leq i\leq n$ as similarly \eqref{02}.
\begin{theorem}{\rm(see \cite{ivan01} and also \cite{wx01})}\label{t04}
For a circumscriptible $n$-simplex $\Omega$, then we have
\begin{equation*}
{R}^2=-\frac{|D|}{2|D_1|},
\end{equation*}
where
\begin{equation*}D=\begin{pmatrix}
  0   & (x_{1}+x_{0})^2& (x_{2}+x_{0})^2 &\cdots  & (x_{n}+x_{0})^2\\
  (x_{0}+x_{1})^2 & 0 & (x_{2}+x_{1})^2  &\cdots  & (x_{n}+x_{1})^2\\
  (x_{0}+x_{2})^2 & (x_{1}+x_{2})^2& 0   & \cdots & (x_{n}+x_{2})^2\\
  \vdots &\vdots &\vdots  &  &\vdots\\
  (x_{0}+x_{n})^2 & (x_{1}+x_{n})^2 &(x_{2}+x_{n})^2& \cdots  & 0\\
\end{pmatrix},
\end{equation*} and
$$D_1=\left(%
\begin{array}{cccc}
  0 & 1 & \cdots & 1\\
  1 & \cdot  & \cdots & \cdot\\
  \vdots & \vdots & D  & \vdots\\
  1 & \cdot  & \cdots & \cdot\\
\end{array}%
\right).$$
\end{theorem}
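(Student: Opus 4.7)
The plan is to recognize Theorem \ref{t04} as essentially the classical Cayley--Menger formula for the circumradius, specialized to the circumscriptible case using the relation $a_{ij}=x_i+x_j$ supplied by Theorem \ref{t01}. Thus the real work is to establish the general formula
$$R^2=-\frac{|D|}{2|D_1|}$$
for an arbitrary $n$-simplex with squared-edge-length matrix $D$ (zero diagonal) and bordered matrix $D_1$; the ``circumscriptible'' part is just the cosmetic substitution $a_{ij}^2=(x_i+x_j)^2$ that produces the entries displayed in the statement.

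To prove the underlying distance-geometry identity, I would translate coordinates so that the circumcenter of $\Omega$ sits at the origin. Then $|A_i|=R$ for every vertex, and the polarization identity gives the Gram matrix
$$G=(A_i\cdot A_j)_{0\le i,j\le n},\qquad G_{ii}=R^2,\qquad G_{ij}=R^2-\tfrac12 a_{ij}^2 \;(i\ne j),$$
so that $G=R^2\,\mathbf{1}\mathbf{1}^{T}-\tfrac12 D$, where $\mathbf{1}$ is the all-ones column. Since the $n+1$ vectors $A_0,\dots,A_n$ lie in $\mathbb R^n$, the Gram matrix has rank at most $n$, whence $\det G=0$.

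Next I would apply the matrix determinant lemma to $\det\!\bigl(-\tfrac12 D+R^2\mathbf 1\mathbf 1^{T}\bigr)=0$. Using $\operatorname{adj}(cM)=c^{n}\operatorname{adj}(M)$ for an $(n{+}1)\!\times\!(n{+}1)$ matrix $M$, this unpacks to
$$(-\tfrac12)^{n+1}|D|+R^2(-\tfrac12)^{n}\,\mathbf 1^{T}\operatorname{adj}(D)\mathbf 1=0.$$
A single cofactor expansion of the bordered matrix $D_1$ along its first row/column identifies $\mathbf 1^{T}\operatorname{adj}(D)\mathbf 1=-|D_1|$, and dividing by the common scalar $(-\tfrac12)^n$ yields $-\tfrac12|D|-R^2|D_1|=0$, i.e. $R^2=-|D|/(2|D_1|)$. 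Finally, Theorem \ref{t01} lets us write $a_{ij}^2=(x_i+x_j)^2$, which gives precisely the matrices $D$ and $D_1$ displayed in the theorem.

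The only delicate step is the algebraic bookkeeping in the matrix-determinant-lemma expansion—one must track the sign and the power of $-\tfrac12$ correctly, and verify via cofactor expansion that $\mathbf 1^{T}\operatorname{adj}(D)\mathbf 1$ really equals $-|D_1|$ (the sign depending on the conventional placement of the bordering row and column). Once that identification is in hand, the rest is immediate, and no facts about the edge-tangent sphere beyond Theorem \ref{t01} are needed.
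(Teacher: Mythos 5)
Your argument is correct. The paper gives no proof of Theorem \ref{t04} at all --- it is quoted from Ivanoff and Shen --- so there is nothing to compare against; your derivation (circumcenter at the origin, Gram matrix $G=R^{2}\mathbf 1\mathbf 1^{T}-\tfrac12 D$ of rank at most $n$, matrix determinant lemma, and the Schur-complement identity $|D_1|=-\mathbf 1^{T}\operatorname{adj}(D)\mathbf 1$) is exactly the classical distance-geometry proof that the cited sources use, and your sign and scaling bookkeeping checks out. The only point worth making explicit is that $|D_1|\neq 0$ (so the division is legitimate) precisely because $D_1$ is, up to normalization, the Cayley--Menger determinant of an affinely independent point set; and, as you note, the circumscriptibility hypothesis enters only through the cosmetic substitution $a_{ij}=x_i+x_j$ from Theorem \ref{t01}.
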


A double inequality for the radius of the circumscriptible
tetrahedron $\Omega=A_{0}A_{1}A_{2}A_{3}A_{4}$ and sharpening
Euler's inequality \eqref{eq001} is proved in \cite{lz01} and
\cite{wz01}
\begin{equation}\label{01}
R\geq \sqrt{3}\rho\geq 3r.
\end{equation}

As a generalization of inequality \eqref{01}, Wu and Zhang
\cite{wz01} posed an analogous problem for the circumscriptible
$n$-simplex.
\begin{prob}\label{c02}
In a circumscriptible $n$-simplex $\Omega$, prove or disprove that
\begin{equation}\label{03}
R\geq \sqrt{\frac{2n}{n-1}}\rho \geq nr.
\end{equation}
\end{prob}
Recently, Wu et al. \cite{wzw01} proved the right hand of double
inequality \eqref{03}.

In this paper, we will give a closed formula for the circumradius of
a circumscriptible $n$-simplex in terms of its balloon radii $x_i$
with $0\leq i\leq n$ as similarly \eqref{02}, and settle the left
hand of double inequality \eqref{03} affirmatively.


\section{Main Results}

\begin{theorem}\label{t05}
The radius $R$ of the circumscribed sphere of a circumscriptible
$n$-simplex $\Omega$ is given by
\begin{equation}\label{341}
\left(\frac{R}{\rho}\right)^2=\frac{[MP-(n-1)(n-3)]^2-[M^2-(n-1)N][P^2-(n-1)Q]}{16(n-1)^2},
\end{equation}
and
\begin{equation}\label{34}
R^2=\frac{[MP-(n-1)(n-3)]^2-[M^2-(n-1)N][P^2-(n-1)Q]}{8(n-1)[P^2-(n-1)Q]},
\end{equation}
where
\begin{equation}\label{zzh001}
M=\sum_{i=0}^{n}{x_{i}},~~ N=\sum_{i=0}^{n}{{x_{i}}
^{2}},~~P=\sum_{i=0}^{n}{\frac{1}{x_{i}}},~~\mbox{and}~~
Q=\sum_{i=0}^{n}{\frac{1}{x_{i}^{2}}}.
\end{equation}
\end{theorem}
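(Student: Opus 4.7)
My plan is to derive both \eqref{341} and \eqref{34} by recognising that the matrices $D$ and $D_1$ of Theorem \ref{t04} are rank-two perturbations of the matrices $A$ and $A_1$ of Theorem \ref{t03}, computing the perturbation via the matrix-determinant lemma, and then feeding in Theorem \ref{t02} for $\rho^2$.

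The starting observation is the identity
\[
D \;=\; A + uv^{T} + vu^{T}, \qquad u=(x_0^2,\ldots,x_n^2)^{T},\ v=(1,\ldots,1)^{T},
\]
which one verifies by expanding $(x_i+x_j)^2 = x_i^2+2x_ix_j+x_j^2$ off the diagonal and noting that $-2x_i^2 + 2x_i^2 = 0$ reproduces the zeros of $D$ on the diagonal. Bordering with the all-ones row and column gives $D_1 = A_1 + \tilde u\tilde v^{T} + \tilde v\tilde u^{T}$, where $\tilde u,\tilde v$ are $u,v$ prepended with a single $0$.

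For $|D|$ I would compute $A^{-1}$ by Sherman--Morrison applied to $A = -4\,\mathrm{diag}(x_i^2) + 2xx^{T}$, obtaining
\[
A^{-1} = -\tfrac14\,\mathrm{diag}(1/x_i^2) + \tfrac{1}{4(n-1)}\,yy^{T},\qquad y=(1/x_0,\ldots,1/x_n)^{T},
\]
and then evaluate the three quadratic forms $\alpha := v^{T}A^{-1}u$, $\beta := u^{T}A^{-1}u$, $\gamma := v^{T}A^{-1}v$ in the quantities $M,N,P,Q$ of \eqref{zzh001}. A short calculation gives
\[
4(n-1)(1+\alpha)=MP-(n-1)(n-3),\quad 4(n-1)\beta=M^2-(n-1)N,\quad 4(n-1)\gamma=P^2-(n-1)Q.
\]
The matrix-determinant lemma for a rank-two update then yields $|D| = |A|\bigl((1+\alpha)^2-\beta\gamma\bigr)$, whose parenthesised factor is already the bracketed expression in \eqref{341}.

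The main obstacle is showing that $|D_1|=|A_1|$ exactly, and the crucial point is that $\tilde v$ coincides with the first column of $A_1$, so $A_1^{-1}\tilde v = e_0$, the first standard basis vector. Since both $\tilde u$ and $\tilde v$ vanish in their leading coordinate, one immediately obtains $\tilde v^{T}A_1^{-1}\tilde u = \tilde v^{T}A_1^{-1}\tilde v = \tilde u^{T}A_1^{-1}\tilde v = 0$, so the $2\times 2$ matrix in the rank-two determinant lemma is triangular with unit diagonal. Hence $|D_1|=|A_1|$, and combining $R^2=-|D|/(2|D_1|)$ with Theorem \ref{t03} produces $R^2 = \rho^2\bigl((1+\alpha)^2 - \beta\gamma\bigr)$, which is \eqref{341}. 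Substituting the explicit $\rho^2 = 2(n-1)/(P^2-(n-1)Q)$ from Theorem \ref{t02} then yields \eqref{34}.
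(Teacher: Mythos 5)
Your proposal is correct, and its computational core is the same as the paper's: the identity $D=A+uv^{T}+vu^{T}$ is exactly the paper's $D=A+B_{1}B_{2}$ (Proof~2 of Lemma \ref{l03}), your $A^{-1}$ agrees with \eqref{281}, and your quantities $4(n-1)(1+\alpha)$, $4(n-1)\beta$, $4(n-1)\gamma$ are precisely the $X_1,X_3,X_2$ of \eqref{zzhe3320}, so the determinant lemma gives the same value of $|D|/|A|$. Where you genuinely diverge is the bridge from determinants to $R^2/\rho^2$: the paper divides the two volume identities $(n!)^2V^2R^2=(-1)^n|D|/2^{n+1}$ (Lemma \ref{l04}) and $(n!)^2V^2\rho^2=2^n(n-1)\bigl(\prod x_i\bigr)^2$ (Lemma \ref{l05}), so the volume cancels and only $|D|$ and $|A|$ (via \eqref{27}) are needed; you instead take the quotient of Ivanoff's formulas $R^2=-|D|/(2|D_1|)$ and $\rho^2=-|A|/(2|A_1|)$ and must additionally prove $|D_1|=|A_1|$. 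Your argument for that extra step is sound and rather elegant: since $\tilde v$ is the first column of $A_1$, $A_1^{-1}\tilde v=e_0$, and because $\tilde u,\tilde v$ have vanishing leading coordinate the $2\times 2$ update matrix is unit triangular, so the bordered determinant is unchanged. This buys you independence from the volume lemmas (Lemmas \ref{l04}--\ref{l05}) at the cost of the invertibility of $A_1$, which is implicit in Theorem \ref{t03} anyway ($|A_1|\neq 0$ is needed for $\rho^2=-|A|/(2|A_1|)$ to be meaningful); either route closes with Theorem \ref{t02} to pass from \eqref{341} to \eqref{34}.
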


\begin{rem}
When $n=2$, then, in the triangle, we have the well known formula
\begin{align*}
R&=\frac{(x_0+x_1)(x_1+x_2)(x_2+x_0)}{4\sqrt{x_0x_1x_2(x_0+x_1+x_2)}}\\
&=\frac{a_{01}a_{12}a_{02}}{\sqrt{(a_{01}+a_{12}+a_{02})
(a_{01}+a_{12}-a_{02})(a_{12}+a_{02}-a_{01})(a_{01}+a_{02}-a_{12})}}.
\end{align*}
\end{rem}

\begin{rem}In 2007, Hajja {\rm\cite[p.~261]{hajja01}} said:``the questions regarding non-regular edge-incentric
d-simplices in which the circumcenter and the incenter coincide were
not considered. We expect these questions to be rather difficult,
since we were unable to find a closed formula for the circumradius
of an edge-incentric d-simplex in terms of its balloon radii. Such a
formula in the form of a quotient of two determinants is given in
{\rm\cite{ivan01}}".

For the given formula \eqref{34}, in our private communication,
Hajja also said:``I am really very impressed that you succeeded in
finding a closed formula for $R$ --- I have tried to do so last year
but never was able to. Actually, I asked a colleague in Germany for
help but he failed too".
\end{rem}
\begin{theorem}\label{t06}
For circumscriptible $n$-simplex $\Omega$, we have
\begin{equation}\label{04}
0\leq R^2-\frac{2n}{n-1}\rho^2\leq (n+1)^2|OG|^2,
\end{equation}
where $O$ and $G$ are the circumcenter and the centroid of the
circumscriptible $n$-simplex $\Omega$, respectively.
\end{theorem}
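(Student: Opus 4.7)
The plan is to derive a single algebraic identity linking $R^2$, $|OI|^2$, and $\rho^2$ which trivializes the left inequality of \eqref{04}, then to reduce the right inequality to a symmetric polynomial inequality in the balloon radii via the classical formula for $|OG|^2$.

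To obtain the identity, I first determine the barycentric coordinates of the edge-incenter $I$. The tangent-length relation gives $|IA_i|^2 = x_i^2 + \rho^2$ for each $i$. Writing $I = \sum_i\alpha_i A_i$ with $\sum_i\alpha_i = 1$, expanding $|IA_i|^2$ via $(A_i - A_j)\cdot(A_i - A_k) = \frac{1}{2}(a_{ij}^2 + a_{ik}^2 - a_{jk}^2)$ and substituting $a_{ij} = x_i + x_j$ (with $a_{ii} = 0$) forces $\alpha_i$ into the form $a/x_i + b/x_i^2$; the self-consistency equations then pin down
\[ \alpha_i = \frac{Px_i - (n-1)}{Tx_i^2}, \qquad \sum_i \alpha_i x_i^2 = \frac{MP - (n^2-1)}{T}, \qquad T = P^2 - (n-1)Q. \]
Applying the Leibniz identity $\sum_i \alpha_i|OA_i|^2 = |OI|^2 + \sum_i \alpha_i|IA_i|^2$ together with $\rho^2 = 2(n-1)/T$ from Theorem~\ref{t02} yields
\[ R^2 = |OI|^2 + \frac{MP - (n-1)^2}{2(n-1)}\rho^2. \qquad (\star) \]

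For the left inequality, $(\star)$ rearranges (using $(n-1)^2 + 4n = (n+1)^2$) to $R^2 - \frac{2n}{n-1}\rho^2 = |OI|^2 + \frac{MP - (n+1)^2}{2(n-1)}\rho^2$, both of whose terms are nonnegative: the first trivially, the second because $MP = (\sum x_i)(\sum 1/x_i) \geq (n+1)^2$ by Cauchy--Schwarz. Equality forces $O = I$ and all $x_i$ equal, recovering the regular simplex. For the right inequality, placing the origin at $O$ and expanding $|G|^2$ via $A_i \cdot A_j = R^2 - a_{ij}^2/2$ gives the classical identity $(n+1)^2|OG|^2 = (n+1)^2R^2 - \sum_{i<j}a_{ij}^2 = (n+1)^2R^2 - M^2 - (n-1)N$, so the right half of \eqref{04} is equivalent to
\[ M^2 + (n-1)N \leq n(n+2)R^2 + \frac{2n}{n-1}\rho^2. \]

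Substituting the closed forms $R^2 = (U^2 - VT)/(8(n-1)T)$ and $\rho^2 = 2(n-1)/T$ and clearing the common denominator $8(n-1)T$ turns this into a symmetric polynomial inequality in $x_0, \ldots, x_n$ attaining equality at $x_0 = \cdots = x_n$. I would prove it by combining $MP \geq (n+1)^2$, the dual Cauchy--Schwarz bound $NQ \geq (n+1)^2$, and the AM--GM estimate $M^2Q + NP^2 \geq 2MP\sqrt{NQ}$, then analyzing the residual cross-terms by careful sign bookkeeping. The main obstacle is precisely this polynomial inequality: for $n = 2$ (where $\rho$ coincides with the triangle's inradius $r$) it reduces to Gerretsen's upper bound $s^2 \leq 4R^2 + 4Rr + 3r^2$, already a delicate estimate; for general $n$ all four symmetric quantities $M, N, P, Q$ enter simultaneously and the estimate does not decouple cleanly.
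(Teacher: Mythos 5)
Your treatment of the left-hand inequality of \eqref{04} is correct and genuinely different from the paper's. The paper works purely algebraically from the closed formula \eqref{341}, bounding $[M^2-(n-1)N][P^2-(n-1)Q]\le \tfrac{4}{(n+1)^2}M^2P^2$ by power means and then minimizing a quadratic in $MP$ on $[(n+1)^2,\infty)$. Your identity $(\star)$ --- in effect a generalized Euler formula $R^2-|OI|^2=\tfrac{MP-(n-1)^2}{2(n-1)}\rho^2$ for the edge-incenter --- makes the left inequality an immediate consequence of $MP\ge(n+1)^2$ and even yields the sharper statement $R^2-\tfrac{2n}{n-1}\rho^2\ge|OI|^2$. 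Your coordinates check out ($\sum_i\alpha_i=1$, $\sum_i\alpha_i x_i^2=(MP-(n^2-1))/T$, and $(\star)$ reduces to $|OI|^2=R^2-2Rr$ when $n=2$), so this is a cleaner and more informative route; you should, however, actually prove or cite the barycentric coordinates of the edge-incenter (they appear in Hajja's paper) rather than asserting that ``self-consistency pins them down.''

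The right-hand inequality, by contrast, is not proved. You correctly reduce it, exactly as the paper does, to $M^2+(n-1)N\le n(n+2)R^2+\tfrac{2n}{n-1}\rho^2$, i.e.\ after clearing denominators to the polynomial inequality \eqref{52}; but at that point you offer only a heuristic (combining $MP\ge(n+1)^2$, $NQ\ge(n+1)^2$ and $M^2Q+NP^2\ge 2MP\sqrt{NQ}$ with ``careful sign bookkeeping'') and explicitly concede that this inequality is the main obstacle and ``does not decouple cleanly.'' It does decouple, but by a mechanism you are missing: for $n\ge 4$ the right-hand side of \eqref{52} is $[(n^2+10n-8)M^2-(n-1)(n-2)(n-4)N][P^2-(n-1)Q]$, in which the coefficients of $N$ and of $Q$ are both non-positive, so (when the first factor is positive --- otherwise there is nothing to prove) the power-mean lower bounds $N\ge M^2/(n+1)$ and $Q\ge P^2/(n+1)$ majorize the whole product by $\tfrac{12n(3n-2)}{(n+1)^2}M^2P^2$, and the problem collapses to a one-variable quadratic inequality in $t=MP$ on $[(n+1)^2,\infty)$ with equality at $t=(n+1)^2$. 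This sign observation fails for $n=3$, where $(n-1)(n-2)(n-4)<0$, so that case needs a separate argument (the paper cites the known tetrahedron result). Without this decoupling step, or some substitute for it, your proof of the upper bound in \eqref{04} is incomplete.
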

\begin{rem}
The left hand of double inequality \eqref{04} is just the left hand
of double inequality \eqref{03}.
\end{rem}


\section{Preliminary Results}

Throughout this section, let $A,~A_1,~D,~D_1$ and $x_{i}$ for $0\leq
i\leq n$ be defined by the above section, $V$ be the volume of
$\Omega$, and
\begin{equation*}
B_{1}=
\begin{pmatrix}
x_{0}^2 &1\\
x_{1}^2 &1\\
x_{2}^2 &1\\
\vdots &\vdots\\
x_{n}^2 &1\\
\end{pmatrix},
B_{2}=
\begin{pmatrix}
1 & 1 & 1 &\cdots & 1 \\
x_{0}^2 & x_{1}^2 & x_{2}^2 &\cdots & x_{n}^2\\
\end{pmatrix}.
\end{equation*}

\begin{lemma}{\rm(\cite[Lemma 1]{wzw01})}\label{l02}We have
\begin{equation}\label{27}
|A|=(-1)^n(n-1)2^{2n+1}\left(\prod_{i=0}^{n}x_{i}\right)^2,
\end{equation}
and
\begin{equation}\label{281}
A^{-1}=
\begin{pmatrix}
 \frac{2-n}{4n-4}\cdot\frac{1}{x_{0}^2}   & \frac{1}{4n-4}\cdot\frac{1}{x_{1}x_{0}} & \frac{1}{4n-4}\cdot\frac{1}{x_{2}x_{0}} &\cdots  & \frac{1}{4n-4}\cdot\frac{1}{x_{n}x_{0}}\\
  \frac{1}{4n-4}\cdot\frac{1}{x_{0}x_{1}} & \frac{2-n}{4n-4}\cdot\frac{1}{x_{1}^2}   & \frac{1}{4n-4}\cdot\frac{1}{x_{2}x_{1}} &\cdots  & \frac{1}{4n-4}\cdot\frac{1}{x_{n}x_{1}}\\
  \frac{1}{4n-4}\cdot\frac{1}{x_{0}x_{2}} & \frac{1}{4n-4}\cdot\frac{1}{x_{1}x_{2}} & \frac{2-n}{4n-4}\cdot\frac{1}{x_{2}^2}   &\cdots  & \frac{1}{4n-4}\cdot\frac{1}{x_{n}x_{2}}\\
  \vdots &\vdots &\vdots  &  &\vdots\\
  \frac{1}{4n-4}\cdot\frac{1}{x_{0}x_{n}} & \frac{1}{4n-4}\cdot\frac{1}{x_{1}x_{n}} & \frac{1}{4n-4}\cdot\frac{1}{x_{2}x_{n}} &\cdots  & \frac{2-n}{4n-4}\cdot\frac{1}{x_{n}^2}\\
\end{pmatrix}.
\end{equation}
\end{lemma}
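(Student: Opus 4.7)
The plan is to recognize that the matrix $A$ has a rank-one-plus-diagonal structure that factors cleanly through the balloon radii. First I would write
$$A = 2 D (E - 2I) D,$$
where $D = \mathrm{diag}(x_0, x_1, \ldots, x_n)$, $E = \mathbf{e}\mathbf{e}^T$ is the $(n+1)\times(n+1)$ all-ones matrix (with $\mathbf{e} = (1,\ldots,1)^T$), and $I$ is the identity. A direct inspection of entries confirms this: for $i\neq j$ one gets $2x_i\cdot 1\cdot x_j = 2x_i x_j$, and for $i=j$ one gets $2x_i(1-2)x_i = -2x_i^2$.

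With this factorization the determinant reduces to a spectral computation. I would diagonalize $E$: it has eigenvalue $n+1$ on the line spanned by $\mathbf{e}$ and eigenvalue $0$ on the $n$-dimensional orthogonal complement, so $E-2I$ has eigenvalues $n-1$ (simple) and $-2$ (with multiplicity $n$). Hence $|E-2I| = (n-1)(-2)^n = (-1)^n 2^n(n-1)$. Combining with $|D|^2 = (\prod x_i)^2$ and the scalar factor $2^{n+1}$ pulled out of $A$ gives
$$|A| = 2^{n+1}\cdot(-1)^n 2^n(n-1)\cdot\left(\textstyle\prod x_i\right)^2 = (-1)^n(n-1)2^{2n+1}\left(\textstyle\prod x_i\right)^2,$$
which is \eqref{27}.

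For the inverse I would use the same factorization to write $A^{-1} = \tfrac{1}{2}D^{-1}(E-2I)^{-1}D^{-1}$. To invert $E-2I$ I would use the ansatz $(E-2I)^{-1} = \alpha E + \beta I$ (justified because $E-2I$ lies in the two-dimensional algebra generated by $E$, since $E^2=(n+1)E$). Expanding $(E-2I)(\alpha E+\beta I)=[\alpha(n-1)+\beta]E - 2\beta I$ and equating to $I$ yields $\beta=-\tfrac{1}{2}$ and $\alpha=\tfrac{1}{2(n-1)}$. Conjugating by $D^{-1}$ and extracting entries gives the off-diagonal formula $(A^{-1})_{ij} = \tfrac{1}{4(n-1)}\cdot\tfrac{1}{x_i x_j}$ and the diagonal formula $(A^{-1})_{ii} = \tfrac{1}{4(n-1)}\cdot\tfrac{1}{x_i^2} - \tfrac{1}{4}\cdot\tfrac{1}{x_i^2} = \tfrac{2-n}{4(n-1)}\cdot\tfrac{1}{x_i^2}$, which is precisely \eqref{281}.

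There is no real obstacle; the only step requiring any care is identifying the correct two-parameter ansatz for $(E-2I)^{-1}$, and even that is optional — an alternative route that avoids spectral reasoning entirely is to take the right-hand side of \eqref{281} as a candidate inverse and verify $A\cdot A^{-1}=I$ by a direct summation, using $\sum_{k=0}^n 1 = n+1$ to collapse the off-diagonal sums. Either route keeps the proof short and purely algebraic.
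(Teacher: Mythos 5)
Your proposal is correct, and it takes a genuinely different (and cleaner) route than the paper. You factor $A=2D(E-2I)D$ with $D=\mathrm{diag}(x_0,\dots,x_n)$ and $E$ the all-ones matrix, get $|A|$ from the eigenvalues $n-1$ (simple) and $-2$ (multiplicity $n$) of $E-2I$, and get $A^{-1}$ by inverting $E-2I$ inside the two-dimensional algebra generated by $E$ via the ansatz $\alpha E+\beta I$; all the arithmetic checks out, including the diagonal entry $\frac{1}{4(n-1)}-\frac14=\frac{2-n}{4(n-1)}$. The paper instead extracts the factor $2^{n+1}\bigl(\prod_i x_i\bigr)^2$ from the rows and columns and row-reduces the resulting $\pm1$ matrix to a triangular form for $|A|$, and then computes $A^{-1}$ through the adjugate, evaluating every cofactor $A_{ij}$ as a separate determinant by similar manipulations. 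The two approaches prove the same identities, but yours buys a substantial economy: one factorization handles both the determinant and the inverse simultaneously, avoids the case analysis over diagonal versus off-diagonal cofactors, and makes the appearance of the constants $n-1$ and $2-n$ transparent as eigenvalue data rather than the outcome of a long cofactor calculation. The only implicit hypothesis you share with the paper is $n\ge 2$ (for $n=1$ the eigenvalue $n-1$ vanishes and $A$ is singular), which is harmless since the simplices under consideration satisfy it.
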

\begin{proof}It is clear that
\begin{align*}
|A|&=2^{n+1}\left(\prod_{i=0}^{n}x_{i}\right)^2
\begin{vmatrix}
 -1 & 1  & 1  &\cdots  & 1\\
  1 & -1 & 1  &\cdots  & 1\\
  1 & 1  & -1 &\cdots  & 1\\
 \vdots &\vdots &\vdots  &  &\vdots\\
  1 & 1  & 1  &\cdots  & -1\\
\end{vmatrix}\end{align*}\begin{align*}
\\&=2^{n+1}\left(\prod_{i=0}^{n}x_{i}\right)^2
\begin{vmatrix}
 -1 & 1  & 1  &\cdots  & 1\\
  2 & -2 & 0  &\cdots  & 0\\
  2 & 0  & -2 &\cdots  & 0\\
 \vdots &\vdots &\vdots  &  &\vdots\\
  2 & 0  & 0  &\cdots  & -2\\
\end{vmatrix}
\\
&=2^{n+1}\left(\prod_{i=0}^{n}x_{i}\right)^2
\begin{vmatrix}
 n-1 & 1  & 1  &\cdots  & 1\\
  0 & -2 & 0  &\cdots  & 0\\
  0 & 0  & -2 &\cdots  & 0\\
 \vdots &\vdots &\vdots  &  &\vdots\\
  0 & 0  & 0  &\cdots  & -2\\
\end{vmatrix}
\\&=2^{n+1}\left(\prod_{i=0}^{n}x_{i}\right)^2\cdot(-1)^n(n-1)2^n
\\&=(-1)^n(n-1)2^{2n+1}\left(\prod_{i=0}^{n}x_{i}\right)^2.
\end{align*}

Let us to compute $A^{-1}$. We know that the adjoint of $A$ is
\begin{equation*}
A^{*}=
\begin{pmatrix}
A_{11} & A_{12} & \cdots & A_{1,n+1}\\
A_{21} & A_{22} & \cdots & A_{2,n+1}\\
\vdots & \vdots & \vdots  & \vdots   \\
A_{n+1,1} & A_{n+1,2} & \cdots & A_{n+1,n+1}\\
\end{pmatrix},
\end{equation*}
where $A_{ij}$ is the cofactor of the element $2x_{i-1}x_{j-1}(i\neq
j)$ or $-2x_{i-1}^2(i=j)$ of $A$.
\par
From the process of computing $|A|$ above, for $0\leq j\leq n$, it
is easily to obtain that
\begin{equation*}
A_{jj}=(-1)^{n-1}(n-2)2^{2n-1}\left(\prod_{\begin{subarray}{c}{i=0,}\\i\neq
j-1\end{subarray}}^{n}x_{i}\right)^2=\frac{2-n}{4(n-1)}\cdot\frac{1}{x_j^2}\cdot|A|.
\end{equation*}
Now, we compute $A_{ij}$ for $0\leq i<j\leq n$ because of
$A_{ij}=A_{ji}$ with $A=A'$. That is
\begin{align*}
A_{ij}&=(-1)^{i+j}
\begin{vmatrix}
  -2x_{0}^2   & 2x_{1}x_{0}& \cdots & 2x_{j-2}x_{0} &2x_{j}x_{0} &\cdots  & 2x_{n}x_{0}\\
  2x_{0}x_{1} & -2x_{1}^2  & \cdots & 2x_{j-2}x_{1} &2x_{j}x_{1} &\cdots  & 2x_{n}x_{1}\\
  \vdots &\vdots&   & \vdots & \vdots & &\vdots\\
  2x_{0}x_{i-2} & 2x_{1}x_{i-2}& \cdots &2x_{j-2}x_{i-2} &2x_{j}x_{i-2}   &\cdots  & 2x_{n}x_{i-2}\\
  2x_{0}x_{i} & 2x_{1}x_{i}& \cdots & 2x_{j-2}x_{i}   &2x_{j}x_{i}  &\cdots  & 2x_{n}x_{i}\\
  \vdots &\vdots &   & \vdots & \vdots & &\vdots\\
  2x_{0}x_{n} & 2x_{1}x_{n}& \cdots & 2x_{j-2}x_{n} & 2x_{j}x_{n} &\cdots  & -2x_{n}^2\\
\end{vmatrix}\\
&=(-1)^{i+j}2^{n}\frac{\left(\prod_{i=0}^{n}\limits
x_{i}\right)^2}{x_{i-1}x_{j-1}}
\begin{vmatrix}
-1 & 1 & 1& 1& \cdots&1\\
1 & -1 & 1& 1& \cdots&1\\
\vdots & \vdots & \ddots & & &\vdots\\
1& 1&  & C_{1} & &1\\
\vdots&\vdots&  & & \ddots &\vdots\\
1& 1& 1& 1& \cdots & -1\\
\end{vmatrix}
\\&
=(-1)^{i+j}2^{n}\frac{\left(\prod_{i=0}^{n}\limits
x_{i}\right)^2}{x_{i-1}x_{j-1}}
\begin{vmatrix}
-2 & & & & &\\
& -2 & & & &\\
& & \ddots & & &\\
& &  & C_{2} & &\\
& &  & & \ddots &\\
& &  & & & -2\\
\end{vmatrix}
\\
&=(-1)^{i+j}2^{n}(-2)^{n-(j-i)}|C_{2}|\frac{\left(\prod_{i=0}^{n}\limits
x_{i}\right)^2}{x_{i-1}x_{j-1}}\\&
=(-1)^{i+j}2^{n}(-2)^{n-(j-i)}(-1)^{j-i+1}(-2)^{j-i-1}\frac{\left(\prod_{i=0}^{n}\limits x_{i}\right)^2}{x_{i-1}x_{j-1}}\\
\end{align*}
\begin{align*}
&=(-1)^{2j+1}2^{n}(-2)^{n-1}\frac{\left(\prod_{i=0}^{n}\limits
x_{i}\right)^2}{x_{i-1}x_{j-1}}\\&
=(-1)^{n}2^{2n-1}\frac{\left(\prod_{i=0}^{n}\limits
x_{i}\right)^2}{x_{i-1}x_{j-1}}
=\frac{1}{4(n-1)}\cdot\frac{1}{x_{i-1}x_{j-1}}\cdot|A|,
\end{align*}
where
\begin{equation*}
C_{1}=
\begin{pmatrix}
1&-2&1&\cdots&1\\
1&1&-2&\cdots&1\\
\vdots&\vdots&\vdots&\ddots&\vdots\\
1&1&1&\cdots&-2\\
1&1&1&\cdots&1\\
\end{pmatrix}_{(j-i)\times(j-i)},
\end{equation*}
and
\begin{equation*}
C_{2}=
\begin{pmatrix}
0&-2&0&\cdots&0\\
0&0&-2&\cdots&0\\
\vdots&\vdots&\vdots&\ddots&\vdots\\
0&0&0&\cdots&-2\\
1&0&0&\cdots&0\\
\end{pmatrix}_{(j-i)\times(j-i)}.
\end{equation*}
It follows that \eqref{281} is true from the above $A^*,A_{ij}$ and
\begin{equation*}\label{28}
A^{-1}=\frac{1}{|A|}A^{*}.
\end{equation*}
\end{proof}
\begin{rem}
Wu et al. {\rm\cite{wzw01}} directly gave $A^{-1}$ without computing
process, and we here give the complete proof of this lemma. And the
proof of {\rm\eqref{27}} is more simple than Hajja
{\rm\cite[p.~250]{hajja01}}.
\end{rem}

\begin{lemma}\label{l03}For $x_i>0$ with $0\leq i\leq n$,
\begin{align*}\begin{split}
|D|=(-1)^n&\frac{2^{2n-3}}{n-1}\left(\prod_{i=0}^{n}x_{i}\right)^2\\&\cdot\{[MP-(n-1)(n-3)]^2-[M^2-(n-1)N][P^2-(n-1)Q]\},
\end{split}\end{align*}
where$M, N, P,$ and $Q$ are given by \eqref{zzh001}.
\end{lemma}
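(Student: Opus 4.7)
The plan is to exploit the fact that the matrix $D$ differs from the matrix $A$ of Theorem~\ref{t03} by a rank-two correction. Expanding $(x_i+x_j)^2=x_i^2+2x_ix_j+x_j^2$ and comparing with the entries of $A$ (namely $2x_ix_j$ off the diagonal and $-2x_i^2$ on it), one verifies in both cases that $D_{ij}-A_{ij}=x_i^2+x_j^2$. Using the rectangular matrices $B_1$ and $B_2$ already introduced in this section, this difference factors as the outer-product sum $B_1B_2$, so
\begin{equation*}
D \;=\; A + B_1 B_2.
\end{equation*}

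First I would invoke the matrix determinant (Weinstein--Aronszajn) identity
\begin{equation*}
|D| \;=\; |A+B_1B_2| \;=\; |A|\,\bigl|\,I_2+B_2A^{-1}B_1\,\bigr|,
\end{equation*}
which replaces the $(n+1)\times(n+1)$ determinant by a $2\times 2$ one. Both $|A|$ and $A^{-1}$ are supplied by Lemma~\ref{l02}. The observation I rely on is that the formula for $A^{-1}$ displayed in \eqref{281} can be rewritten compactly as
\begin{equation*}
A^{-1} \;=\; \frac{1}{4(n-1)}\,uu^{T} \;-\; \frac{1}{4}\,\mathrm{diag}\!\left(\tfrac{1}{x_0^{2}},\ldots,\tfrac{1}{x_n^{2}}\right),
\end{equation*}
where $u=(1/x_0,\ldots,1/x_n)^{T}$.

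The main computation is then to evaluate the four entries of $B_2A^{-1}B_1$. Writing $e=(1,\ldots,1)^{T}$ and $v=(x_0^{2},\ldots,x_n^{2})^{T}$, the columns of $B_1$ are $v$ and $e$ while the rows of $B_2$ are $e^{T}$ and $v^{T}$, so I only need the elementary inner products $u^{T}e=P$, $u^{T}v=M$, $e^{T}\mathrm{diag}(1/x_i^{2})e=Q$, $v^{T}\mathrm{diag}(1/x_i^{2})v=N$, and $e^{T}\mathrm{diag}(1/x_i^{2})v=n+1$. Substituting and using the simplification $4(n-1)-(n-1)(n+1)=-(n-1)(n-3)$ on the diagonal entries yields
\begin{equation*}
I_2+B_2A^{-1}B_1 \;=\; \frac{1}{4(n-1)}\begin{pmatrix} MP-(n-1)(n-3) & P^{2}-(n-1)Q \\ M^{2}-(n-1)N & MP-(n-1)(n-3) \end{pmatrix}.
\end{equation*}

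Finally I would multiply the $2\times 2$ determinant of this matrix by $|A|=(-1)^{n}(n-1)2^{2n+1}\bigl(\prod x_i\bigr)^{2}$ from \eqref{27}; the prefactor $1/[4(n-1)]^{2}$ combines with $(n-1)2^{2n+1}$ to give precisely $2^{2n-3}/(n-1)$, reproducing the constant in the statement. I do not anticipate a serious obstacle here: the conceptual step is the rank-two decomposition $D=A+B_1B_2$, and once this is in place everything reduces to bookkeeping in a $2\times 2$ determinant. The most error-prone point is tracking signs and the arithmetic simplification $-(n-1)(n+1)+4(n-1)=-(n-1)(n-3)$, but neither is subtle.
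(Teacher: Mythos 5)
Your proposal is correct and coincides with the paper's second proof of Lemma \ref{l03}, which likewise writes $D=A+B_1B_2$ and evaluates $|A|\cdot|E_2+B_2A^{-1}B_1|$ (the paper derives the Weinstein--Aronszajn identity on the spot via a block determinant, whereas you invoke it directly). All of your intermediate quantities, including the $2\times2$ matrix $\frac{1}{4(n-1)}\begin{pmatrix} MP-(n-1)(n-3) & P^{2}-(n-1)Q \\ M^{2}-(n-1)N & MP-(n-1)(n-3)\end{pmatrix}$, match the paper's \eqref{zzh33331} exactly.
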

We shall give two proofs of Lemma \ref{l03} as follows.
\begin{proof}[{\bf Proof {\rm 1}}]Denoting the $j-$th column by $P_j$ and the $j-$th row
by $Q_j$, we perform the following operations on $|D|$:
\begin{enumerate}
\item
We accession a new row $Q_1=\begin{pmatrix} 1& x_0^2   & x_1^2&
x_2^2 &\cdots  & x_n^2
\end{pmatrix}.$
\item
We subtract $Q_1$ from $Q_{j+1}$ for $j = 1,\cdots, n+1$.
\item
We accession a new column $P_1=\begin{pmatrix} 1&0& x_0^2   & x_1^2&
x_2^2 &\cdots  & x_n^2
\end{pmatrix}^{'}.$
\item
We subtract $P_1$ from $P_{j+2}$ for $j = 1,\cdots, n+1$.
\item
 We divide $Q_{j+2}$ and $P_{j+2}$ by taking appropriate common factor $x_{j-1}$ for $j = 1,\cdots, n+1$.
\item
 We add $P_{j+3}$ from $P_3$ for $j = 1,\cdots, n$ and divide $P_3$ by taking a common factor $n-1$.
\item
We subtract $P_3$ from $P_{j+3}$ for $j = 1,\cdots, n$.
 \item
 We add $\frac{1}{4}\left(\frac{1}{n-1}P-\frac{1}{x_j}\right)Q_{j+3}$ from
 $Q_1$ and
 $\frac{1}{4}\left({x_j}-\frac{1}{n-1}M\right)Q_{j+3}$ from $Q_2$
 for $j =1, 2,\cdots, n$, and also add
 $\frac{1}{2}\left(\frac{1}{n-1}P
 -\frac{1}{x_0}\right)Q_{3}$ from
 $Q_1$ and
 $\frac{1}{2}\left({x_0}-\frac{1}{n-1}M\right)Q_{3}$ from
 $Q_2$.
\end{enumerate}
It is clear to see that
\begin{align*}
|D|&=
\begin{vmatrix}
  0   & (x_{1}+x_{0})^2& (x_{2}+x_{0})^2 &\cdots  & (x_{n}+x_{0})^2\\
  (x_{0}+x_{1})^2 & 0 & (x_{2}+x_{1})^2  &\cdots  & (x_{n}+x_{1})^2\\
  (x_{0}+x_{2})^2 & (x_{1}+x_{2})^2& 0   & \cdots & (x_{n}+x_{2})^2\\
  \vdots &\vdots &\vdots  &  &\vdots\\
  (x_{0}+x_{n})^2 & (x_{1}+x_{n})^2 &(x_{2}+x_{n})^2& \cdots  & 0\\
\end{vmatrix}\\
&=
\begin{vmatrix}1& x_0^2   & x_1^2& x_2^2 &\cdots  & x_n^2\\
 0& 0   & (x_{1}+x_{0})^2& (x_{2}+x_{0})^2 &\cdots  & (x_{n}+x_{0})^2\\
  0& (x_{0}+x_{1})^2 & 0 & (x_{2}+x_{1})^2  &\cdots  & (x_{n}+x_{1})^2\\
  0& (x_{0}+x_{2})^2 & (x_{1}+x_{2})^2& 0   & \cdots & (x_{n}+x_{2})^2\\
  \vdots & \vdots &\vdots &\vdots  &  &\vdots\\
  0& (x_{0}+x_{n})^2 & (x_{1}+x_{n})^2 &(x_{2}+x_{n})^2& \cdots  & 0\\
\end{vmatrix}\\
&=
\begin{vmatrix}1& x_0^2   & x_1^2& x_2^2 &\cdots  & x_n^2\\
 -1& -x_0^2   & x_{0}^2+2x_{0}x_{1}& x_{0}^2+2x_{0}x_{2} &\cdots  & x_{0}^2+2x_{0}x_{n}\\
  -1& x_{1}^2+2x_{0}x_{1} & -x_1^2  &  x_{1}^2+2x_{1}x_{2} &\cdots  & x_{1}^2+2x_{1}x_{n}\\
  -1& x_{2}^2+2x_{0}x_{2}& x_{2}^2+2x_{1}x_{2}& -x_2^2   & \cdots & x_{2}^2+2x_{2}x_{n}\\
  \vdots & \vdots &\vdots &\vdots  &  &\vdots\\
   -1& x_{n}^2+2x_{0}x_{n}& x_{n}^2+2x_{1}x_{n}&x_{n}^2+2x_{1}x_{n}  & \cdots  &  -x_n^2\\
\end{vmatrix}\\
&=
\begin{vmatrix}1&0& 0   & 0& 0 &\cdots  & 0\\
  0&1& x_0^2   & x_1^2& x_2^2 &\cdots  & x_n^2\\
  x_0^2&-1& -x_0^2   & x_{0}^2+2x_{0}x_{1}& x_{0}^2+2x_{0}x_{2} &\cdots  & x_{0}^2+2x_{0}x_{n}\\
  x_1^2&-1& x_{1}^2+2x_{0}x_{1} & -x_1^2  &  x_{1}^2+2x_{1}x_{2} &\cdots  & x_{1}^2+2x_{1}x_{n}\\
  \vdots &\vdots & \vdots &\vdots &\vdots  &  &\vdots\\
  x_n^2&-1& x_{n}^2+2x_{0}x_{n}& x_{n}^2+2x_{1}x_{n}&x_{n}^2+2x_{1}x_{n}  & \cdots  &  -x_n^2\\
\end{vmatrix}
\\&=
\begin{vmatrix}1&0& -1   & -1& -1 &\cdots  & -1\\
  0&1& x_0^2   & x_1^2& x_2^2 &\cdots  & x_n^2\\
  x_0^2&-1& -2x_0^2   & 2x_{0}x_{1}& 2x_{0}x_{2} &\cdots  & 2x_{0}x_{n}\\
  x_1^2&-1& 2x_{0}x_{1} & -2x_1^2  &  2x_{1}x_{2} &\cdots  & 2x_{1}x_{n}\\
  \vdots &\vdots & \vdots &\vdots &\vdots  &  &\vdots\\
  x_n^2&-1& 2x_{0}x_{n}& 2x_{1}x_{n}&2x_{1}x_{n}  & \cdots  &  -2x_n^2\\
\end{vmatrix}
\\
&=\prod_{i=0}^n x_i^2
\begin{vmatrix}1&0& -\frac{1}{x_0}   & -\frac{1}{x_1}& -\frac{1}{x_2}&\cdots  & -\frac{1}{x_n}\\
  0&1& x_0   & x_1& x_2 &\cdots  & x_n\\
  x_0&-\frac{1}{x_0}& -2   & 2& 2 &\cdots  & 2\\
  x_1&-\frac{1}{x_1}& 2 & -2  &  2 &\cdots  & 2\\
  \vdots &\vdots & \vdots &\vdots &\vdots  &  &\vdots\\
  x_n&-\frac{1}{x_n}& 2& 2&2  & \cdots  &  -2\\
\end{vmatrix}\\
\end{align*}
\begin{align*}
&=(n-1)\prod_{i=0}^nx_i^2
\begin{vmatrix}1&0& -\frac{1}{n-1}P   & -\frac{1}{x_1}& -\frac{1}{x_2}
&\cdots  & -\frac{1}{x_n}\\
  0&1&\frac{1}{n-1}M   & x_1& x_2 &\cdots  & x_n\\
  x_0&-\frac{1}{x_0}& 2   & 2& 2 &\cdots  & 2\\
  x_1&-\frac{1}{x_1}& 2 & -2  &  2 &\cdots  & 2\\
  x_2&-\frac{1}{x_2}& 2& 2& -2  & \cdots & 2\\
  \vdots &\vdots & \vdots &\vdots &\vdots  &  &\vdots\\
  x_n&-\frac{1}{x_n}& 2& 2&2  & \cdots  &  -2\\
\end{vmatrix}\\
&=(n-1)\prod_{i=0}^nx_i^2
\begin{vmatrix}1&0& -\frac{1}{n-1}P   &
\frac{1}{n-1}P-\frac{1}{x_1}&\cdots
& \frac{1}{n-1}P-\frac{1}{x_n}\\
  0&1&\frac{1}{n-1}M   & x_1-\frac{1}{n-1}M& \cdots
  & x_n-\frac{1}{n-1}M\\
  x_0&-\frac{1}{x_0}& 2   & 0 &\cdots  & 0\\
  x_1&-\frac{1}{x_1}& 2 & -4   &\cdots  & 0\\
  \vdots &\vdots & \vdots &\vdots   &  &\vdots\\
  x_n&-\frac{1}{x_n}& 2& 0 & \cdots  &  -4\\
\end{vmatrix}\\
&=(n-1)\prod_{i=0}^nx_i^2
\begin{vmatrix}{\frac{1}{4(n-1)}}X_1
& {\frac{1}{4(n-1)}}X_2&0 & 0& 0&\cdots &0\\
{\frac{1}{4(n-1)}}X_3& {\frac{1}{4(n-1)}}X_1&0& 0& 0&\cdots
  & 0\\  x_0&-\frac{1}{x_0}& 2   & 0& 0 &\cdots  & 0\\
  x_1&-\frac{1}{x_1}& 2 & -4  &  0 &\cdots  & 0\\
  x_2&-\frac{1}{x_2}& 2& 0& -4  & \cdots & 0\\
  \vdots &\vdots & \vdots &\vdots &\vdots  &  &\vdots\\
  x_n&-\frac{1}{x_n}& 2& 0&0  & \cdots  &  -4\\
\end{vmatrix}\\&
=(-1)^n\frac{2^{2n-3}}{n-1}\left(\prod_{i=0}^{n}x_{i}\right)^2\cdot(X_1^2-X_2X_3).
\end{align*}
where
\begin{align}\begin{split}\label{zzhe3320}
X_1&=MP-(n-1)(n-3),\\
X_2&=P^2-
(n-1)Q,\\
X_3&=M^2-(n-1)N, \end{split}\end{align} and $M, N, P, Q$ are given
by \eqref{zzh001}.
\end{proof}
\begin{proof}[{\bf Proof {\rm 2}}]
It is easily to find that
\begin{align}\label{20}\begin{split}
\begin{vmatrix}
A & B_{1}\\
-B_{2} & E_{2}\\
\end{vmatrix}
&=\begin{vmatrix}
A & B_{1}\\
0 & E_{2}+B_{2}A^{-1}B_{1}\\
\end{vmatrix}
=|A|\cdot|E_{2}+B_{2}A^{-1}B_{1}|,\end{split}
\end{align}
and
\begin{align}\label{21}\begin{split}
\begin{vmatrix}
A & B_{1}\\
-B_{2} & E_{2}\\
\end{vmatrix}
&=\begin{vmatrix}
A+B_{1}B_{2} & B_{1}\\
0 & E_{2}\\
\end{vmatrix}
=|A+B_{1}B_{2}|,\end{split}
\end{align}
where $E_{2}=\begin{pmatrix}1 &0 \\ 0 &1
\end{pmatrix}$.

By means of \eqref{281}, we obtain
\begin{align}\label{zzh33331}\begin{split}
&E_{2}+B_{2}A^{-1}B_{1}=
\begin{pmatrix}
\frac{1}{4(n-1)}X_1 & \frac{1}{4(n-1)}X_2\\
\frac{1}{4(n-1)}X_3& \frac{1}{4(n-1)}X_1
\end{pmatrix}.\end{split}
\end{align}

From \eqref{27} and \eqref{20}--\eqref{zzh33331}, it is deduced that
\begin{align*}
|D|=&|A+B_{1}B_{2}|=|A|\cdot|E_{2}+B_{2}A^{-1}B_{1}|
\\=&(-1)^n\frac{2^{2n-3}}{n-1}\left(\prod_{i=0}^{n}x_{i}\right)^2(X_1^2-X_2X_3).
\end{align*}
where $X_1,X_2$ and $X_3$ are given by \eqref{zzhe3320}.

This evidently completes the proof of Lemma \ref{l03}.
\end{proof}

\begin{lemma}{\rm(\cite[Corollary 2,~p.~96]{wx01})}\label{l04}
For $n$-simplex $\Omega$, we have
\begin{equation*}
(n!)^2V^2R^2=-\det\left(-\frac{1}{2}a_{ij}^2\right)=\frac{(-1)^n|D|}{2^{n+1}}.
\end{equation*}
\end{lemma}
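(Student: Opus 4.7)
The plan is to prove both equalities simultaneously by exhibiting the matrix $M:=\bigl(-\tfrac12 a_{ij}^2\bigr)_{0\le i,j\le n}$ as a congruence of an explicit square matrix under a signature-diagonal form. Place the circumcenter $O$ at the origin, so $|A_i|=R$ for every vertex. Then the polarization identity $a_{ij}^2=|A_i-A_j|^2=|A_i|^2+|A_j|^2-2A_i\cdot A_j$ collapses to
$$-\tfrac12 a_{ij}^2 \;=\; A_i\cdot A_j \;-\; R^2.$$
Let $X$ be the $n\times(n+1)$ matrix whose $i$-th column is the position vector $A_i$, and let $\mathbf{1}\in\mathbb R^{n+1}$ be the all-ones column. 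The displayed identity is exactly the matrix equality $M = X^T X - R^2\,\mathbf{1}\mathbf{1}^T$.

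The second step absorbs both terms into a single congruence. Setting
$$Z=\begin{pmatrix} X \\ \mathbf{1}^T \end{pmatrix},\qquad D_s=\operatorname{diag}(I_n,\,-R^2),$$
a direct block multiplication gives $Z^T D_s Z = X^T X - R^2\mathbf{1}\mathbf{1}^T = M$. Taking determinants, $\det M = (\det Z)^2\,\det D_s = -R^2(\det Z)^2$. To evaluate $\det Z$, subtract the first column from each of the others (preserving the determinant) and expand along the bottom row to obtain $\det Z=(-1)^n\det(A_1-A_0,\dots,A_n-A_0)$, whose absolute value is the classical $n!V$. Substituting yields $\det M=-(n!)^2V^2R^2$, i.e.\ $(n!)^2V^2R^2=-\det\bigl(-\tfrac12 a_{ij}^2\bigr)$, which is the first equality of Lemma~\ref{l04}.

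The second equality is bookkeeping: $M$ equals the $(n+1)\times(n+1)$ matrix $(a_{ij}^2)$ scaled globally by $-\tfrac12$, so $\det M = (-\tfrac12)^{n+1}|D|$, and therefore $-\det M=(-1)^n|D|/2^{n+1}$. No genuine obstacle is anticipated; the only step inviting any care is the factorization $M=Z^T D_s Z$, which becomes transparent once the circumcenter is placed at the origin, revealing $M$ as a signature-$(n,1)$ Gram-type form whose discriminant reads off $V^2R^2$ directly.
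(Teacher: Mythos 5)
Your proof is correct, but note that the paper offers no proof of this lemma at all: it is imported verbatim from Shen's book \cite[Corollary 2, p.~96]{wx01}, so there is nothing internal to compare against. The standard route in that source (and in Ivanoff's note) goes through the bordered Cayley--Menger determinant: one combines the quotient formula $R^2=-|D|/(2|D_1|)$ of Theorem \ref{t04} with the classical volume identity $|D_1|=(-1)^{n+1}2^{n}(n!)^2V^2$ to eliminate $|D_1|$. Your argument is genuinely different and arguably cleaner: placing the circumcenter at the origin turns $\bigl(-\tfrac12 a_{ij}^2\bigr)$ into the rank-one update $X^TX-R^2\mathbf{1}\mathbf{1}^T$, and the congruence $M=Z^TD_sZ$ with $\det D_s=-R^2$ and $\det Z=\pm\, n!\,V$ delivers both the sign and the magnitude in one stroke, with no bordered determinant and no appeal to Theorem \ref{t04}. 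I checked the details: the diagonal entries vanish as required since $A_i\cdot A_i-R^2=0$; the block product $Z^TD_sZ=X^TX-R^2\mathbf{1}\mathbf{1}^T$ is right; the cofactor expansion giving $\det Z=(-1)^n\det(A_1-A_0,\dots,A_n-A_0)$ is right; and the scaling bookkeeping $-\det M=(-1)^n|D|/2^{n+1}$ is right, using that the off-diagonal entries of $D$ are exactly $a_{ij}^2=(x_i+x_j)^2$. What your approach buys is a self-contained, coordinate-geometric proof of a fact the paper must otherwise outsource; what it costs is only the (harmless) normalization that the simplex be non-degenerate so that $O$ exists and $V>0$, which is already part of the paper's standing hypotheses.
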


\begin{lemma}{\rm(\cite[Corollary 1]{lzc01})}\label{l05}
Given a circumscriptible $n$-simplex $\Omega$, we have
\begin{equation*}\label{11}
(n!)^2V^2\rho^2=2^n(n-1)\left(\prod_{i=0}^{n}x_{i}\right)^2.
\end{equation*}
\end{lemma}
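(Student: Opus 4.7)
The plan is to derive the identity by combining Theorem \ref{t03} (which expresses $\rho^2$ as $-|A|/(2|A_1|)$) with the classical Cayley--Menger determinant formula for $V^2$, while invoking Lemma \ref{l02} for the explicit evaluation of $|A|$. Since $|A|$ is already in hand, the real work lies in identifying $|A_1|$ with the Cayley--Menger determinant of $\Omega$.

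The key observation I would use is the uniform identity
$$A_{ij}=a_{ij}^{2}-x_{i}^{2}-x_{j}^{2}\qquad (0\le i,j\le n),$$
where $a_{ij}=x_i+x_j$ for $i\ne j$ and we set $a_{ii}:=0$. Indeed, $(x_i+x_j)^2=2x_ix_j+x_i^2+x_j^2$ handles the off-diagonal entries of $A$, while the diagonal entries $-2x_i^2$ fit the same pattern. Building on this, I would perform two symmetric manipulations on the bordered matrix $A_1$: for each $j=1,\ldots,n+1$, add $x_{j-1}^{2}$ times the border column (column $0$) to column $j$; and for each $i=1,\ldots,n+1$, add $x_{i-1}^{2}$ times the border row to row $i$. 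Because the $(0,0)$ entry of $A_1$ is $0$ while the remainder of the border is all $1$'s, these elementary operations leave the border untouched, and they cancel precisely the $-x_{i-1}^{2}-x_{j-1}^{2}$ correction in the interior block. The upshot is that $|A_1|$ equals the determinant of the classical Cayley--Menger matrix of $\Omega$, so by the standard volume formula one obtains $|A_1|=(-1)^{n+1}2^{n}(n!)^{2}V^{2}$.

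Plugging this expression and the value of $|A|$ from Lemma \ref{l02} into $\rho^2=-|A|/(2|A_1|)$ and simplifying the signs and powers of $2$ yields
$$\rho^2=\frac{2^{n}(n-1)\bigl(\prod_{i=0}^{n}x_{i}\bigr)^{2}}{(n!)^{2}V^{2}},$$
which is equivalent to the claimed identity.

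The anticipated main obstacle is purely bookkeeping: the careful tracking of the sign $(-1)^n$ vs.\ $(-1)^{n+1}$ and the various powers of $2$ from both the volume formula and Lemma \ref{l02}. Conceptually, however, the whole argument hinges on the single clean observation that the column/row additions above convert the edge-tangent-sphere matrix $A_1$ verbatim into the Cayley--Menger matrix of $\Omega$; once this identification is articulated, everything else is substitution.
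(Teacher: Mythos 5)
Your proposal is correct, but there is nothing in the paper to compare it against: the paper does not prove this lemma at all, it simply cites it as Corollary 1 of Lin's paper \cite{lzc01}. Your derivation is a legitimate self-contained proof. The central identity $2x_ix_j=(x_i+x_j)^2-x_i^2-x_j^2$ (and $-2x_i^2=0-2x_i^2$ on the diagonal) does show that adding $x_{j-1}^2$ times the border column to each interior column, and then $x_{i-1}^2$ times the border row to each interior row, transforms $A_1$ into the Cayley--Menger matrix $D_1$ without disturbing the border (since the $(0,0)$ entry is $0$); hence $|A_1|=|D_1|=(-1)^{n+1}2^n(n!)^2V^2$. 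Combined with $|A|=(-1)^n(n-1)2^{2n+1}\bigl(\prod_{i=0}^n x_i\bigr)^2$ from Lemma \ref{l02} and $\rho^2=-|A|/(2|A_1|)$ from Theorem \ref{t03}, the signs cancel and the powers of $2$ reduce to $2^{2n+1}/2^{n+1}=2^n$, giving exactly the claimed identity. This is consistent with the paper's own use of the analogous bordered-determinant identity for $R$ (Theorem \ref{t04} together with Lemma \ref{l04}), and in fact your observation that $A_1$ and $D_1$ have equal determinants is the same mechanism that makes Proof 1 of Lemma \ref{l03} work. In short: the argument is sound, and it supplies a proof the paper leaves to an external (Chinese-language) reference.
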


\begin{lemma}{\rm(\cite[~(3.5.11),~p.~112]{wx01})}\label{l06}
Let $O$ and $G$ are the circumcenter and the centroid of the
$n$-simplex $\Omega$, respectively. Then we have
\begin{equation*}
|OG|^2=R^2-\frac{1}{(n+1)^2}\sum_{0\leq i<j\leq n}a_{ij}^2.
\end{equation*}
\end{lemma}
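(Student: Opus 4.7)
The plan is to reduce each half of \eqref{04} to an algebraic inequality in the balloon radii $x_i$ by substituting the closed forms from Theorem~\ref{t05} and Theorem~\ref{t02}, and to handle the $|OG|^2$ term via Lemma~\ref{l06}. Set $L = P^2-(n-1)Q>0$ and $K = [MP-(n-1)(n-3)]^2-[M^2-(n-1)N]L$, so that $\rho^2 = 2(n-1)/L$ and $R^2 = K/[8(n-1)L]$. Since $a_{ij}=x_i+x_j$ on a circumscriptible simplex, a direct count (each $x_i^2$ appears in $n$ pairs, and $\sum 2x_ix_j = M^2-N$) gives the auxiliary identity
\[
\sum_{0\leq i<j\leq n} a_{ij}^2 \;=\; M^2 + (n-1)N,
\]
which I combine with Lemma~\ref{l06} to eliminate $|OG|^2$.

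For the left inequality $R^2\geq \frac{2n}{n-1}\rho^2$, the ratio formula \eqref{341} converts it into $K\geq 32n(n-1)$, and after factoring out $n-1$ this becomes the scale-invariant symmetric inequality
\[
M^2Q + NP^2 - (n-1)NQ - 2(n-3)MP + (n-1)(n-3)^2 \;\geq\; 32n,
\]
with equality at $x_0=x_1=\cdots=x_n$. I plan to establish this by combining the Cauchy--Schwarz estimates $MP\geq (n+1)^2$ and $NQ\geq (n+1)^2$ with the AM--GM bound $M^2Q + NP^2\geq 2MP\sqrt{NQ}$, after which the residual terms should be packaged into a weighted sum of the squares $(x_i-x_j)^2$ (and, in the $1/x_i$ variables, $(1/x_i-1/x_j)^2$).

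For the right inequality $R^2-\frac{2n}{n-1}\rho^2\leq (n+1)^2|OG|^2$, Lemma~\ref{l06} together with the auxiliary identity yields $(n+1)^2|OG|^2 = (n+1)^2R^2 - M^2 - (n-1)N$, so the inequality rearranges to
\[
M^2 + (n-1)N \;\leq\; n(n+2)R^2 + \frac{2n}{n-1}\rho^2.
\]
Substituting the closed forms of $R^2$ and $\rho^2$, clearing the common denominator $8(n-1)L$, and reorganising, this reduces to the polynomial condition
\[
8(n-1)L\bigl(M^2+(n-1)N\bigr) - 32n(n-1) \;\leq\; n(n+2)K,
\]
a second symmetric inequality of the same shape as the first, with equality at the regular simplex. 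I expect this to yield to the same toolkit, or more cleanly to follow by adding a manageable nonnegative residual to the already-established bound $K\geq 32n(n-1)$.

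The main obstacle is the first polynomial inequality $K\geq 32n(n-1)$. A single naive application of AM--GM or Cauchy--Schwarz is too lossy: a direct check with $n=2$ and $x=(1,2,10)$ shows that the crude bound $M^2Q+NP^2\geq 2MP(n+1)$ already fails to deliver the required $32n$, so the positive contribution from $M^2Q+NP^2$ must be tracked against the negative $-(n-1)NQ$ rather than estimated in isolation. The delicate step is therefore to find the correct weighted combination of Cauchy--Schwarz and AM--GM inequalities — equivalently, the correct positive weights $\omega_{ij}$ in an SOS decomposition of $K-32n(n-1)$ in the basis $(x_i-x_j)^2$ — that is tight at the regular simplex yet strong enough to survive extreme configurations.
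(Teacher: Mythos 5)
Your proposal does not prove the statement at hand. The statement is Lemma \ref{l06} itself --- the identity $|OG|^2=R^2-\frac{1}{(n+1)^2}\sum_{0\leq i<j\leq n}a_{ij}^2$ for an arbitrary $n$-simplex --- but everything you wrote is a strategy for Theorem \ref{t06} (the double inequality \eqref{04}), and in the course of it you explicitly \emph{invoke} Lemma \ref{l06} as a known input (``handle the $|OG|^2$ term via Lemma~\ref{l06}''). So the object you were asked to establish appears in your argument only as a hypothesis, never as a conclusion; nothing in the proposal engages with why the identity is true. (For the record, the paper does not prove it either: it is quoted from Shen's book \cite{wx01}. But a citation in the source is not a licence to assume the result when the result is precisely what is to be proved.)

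The lemma itself has a short self-contained proof that your proposal never touches, and which would be the natural thing to supply. Place the circumcenter $O$ at the origin, so $|A_i|=R$ for every $i$ and $G=\frac{1}{n+1}\sum_{i=0}^{n}A_i$. Then
\begin{equation*}
(n+1)^2|OG|^2=\Bigl|\sum_{i=0}^{n}A_i\Bigr|^2=\sum_{i=0}^{n}|A_i|^2+2\sum_{0\leq i<j\leq n}A_i\cdot A_j ,
\end{equation*}
and from $a_{ij}^2=|A_i-A_j|^2=2R^2-2A_i\cdot A_j$ one gets $2A_i\cdot A_j=2R^2-a_{ij}^2$. Substituting, the right-hand side becomes $(n+1)R^2+n(n+1)R^2-\sum_{0\leq i<j\leq n}a_{ij}^2=(n+1)^2R^2-\sum_{0\leq i<j\leq n}a_{ij}^2$, and dividing by $(n+1)^2$ gives the claim. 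Note also that this identity holds for every $n$-simplex, not only circumscriptible ones, so none of your machinery with the balloon radii $x_i$, the quantities $M,N,P,Q$, or the closed forms for $R^2$ and $\rho^2$ is relevant to it.
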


\section{The Proof of Theorem \ref{t05}}
\begin{proof}
This follow straightforwardly from Theorem \textbf{II} and Lemmas
\ref{l03}--\ref{l05} by standard arguments.
\end{proof}

\section{The Proof of Theorem \ref{t06}}
\begin{proof}We will prove Theorem \ref{t06} with two steps. \begin{enumerate}
\item [(i)] Firstly, we prove the left hand of inequality
\eqref{04}. \par Let $M, N, P,$ and $Q$ are given by \eqref{zzh001}.
By using the well-known power mean inequality and Cauchy inequality,
then we have $N\geq \frac{M^2}{n+1},Q\geq \frac{P^2}{n+1},$ and
$MP\geq (n+1)^2$. Further considering $P^2-(n-1)Q>0$ follows that
\begin{align}\label{zzh30}\begin{split}
[M^2&-(n-1)N][P^2-(n-1)Q]\\&\leq
\left[M^2-\frac{n-1}{n+1}M^2\right]\left[P^2-\frac{n-1}{n+1}P^2\right]=\frac{4}{(n+1)^2}M^2P^2.
\end{split}\end{align} From \eqref{zzh30}, $1-\frac{4}{(n+1)^2}>0$
and the function
$$y=\left[1-\frac{4}{(n+1)^2}\right]\left[x-\frac{(n-3)(n+1)^2}{n+3}\right]^2-\frac{4(n-1)(n-3)^2}{n+3}$$
is increasing on interval
$\left(\frac{(n-3)(n+1)^2}{n+3},+\infty\right)$, and $MP\geq
(n+1)^2\geq \frac{(n-3)(n+1)^2}{n+3}$ for $n\geq 2$, we obtain
\begin{align}\label{zzh31}
&[MP-(n-1)(n-3)]^2-[M^2-(n-1)N][P^2-(n-1)Q]\nonumber\\  \geq
&[MP-(n-1)(n-3)]^2-\frac{4}{(n+1)^2}M^2P^2\nonumber\\
=&\left[1-\frac{4}{(n+1)^2}\right]\left[MP-\frac{(n-3)(n+1)^2}{n+3}\right]^2
-\frac{4(n-1)(n-3)^2}{n+3} \\ \geq
&\left[1-\frac{4}{(n+1)^2}\right]\left[(n+1)^2-\frac{(n-3)(n+1)^2}{n+3}\right]^2
-\frac{4(n-1)(n-3)^2}{n+3}\nonumber\\ =&32n(n-1).\nonumber
\end{align}

According to \eqref{341} and \eqref{zzh31}, we get
\begin{equation*}
\left(\frac{R}{\rho}\right)^2\geq\frac{32n(n-1)}{16(n-1)^2}=\frac{2n}{n-1}.
\end{equation*}
It is clear to show that the left of inequality \eqref{04} holds.

\item [(ii)] Secondly, with Lemma \ref{l06}, the right hand of inequality \eqref{04}
is
\begin{equation}\label{05}
(n+1)^2R^2-\sum_{0\leq i<j\leq n}{a_{ij}^2}\geq
R^2-\frac{2n}{n-1}\rho^2.
\end{equation}
For $n=3$, the required result is proved in \cite{lz01}.

Now we prove that inequality \eqref{05} holds when $n\geq 4$.
Obviously, from Theorems \ref{t02}--\ref{t05}, inequality \eqref{05}
is equivalent to
\begin{align*}
n(n+2)R^2&+\frac{2n}{n-1}\rho^2 \geq \sum_{0\leq i<j\leq
n}{a_{ij}^2},\end{align*} or
\begin{align*}\frac{n(n+2)}{8(n-1)}&\cdot\frac{[MP-(n-1)(n-3)]^2-[M^2-(n-1)N][P^2-(n-1)Q]}{P^2-(n-1)Q}\nonumber\\
&+\frac{4n}{P^2-(n-1)Q}\geq M^2+(n-1)N,
\end{align*}
that is
\begin{align}\label{52}\begin{split}
&n(n+2)[MP-(n-1)(n-3)]^2+32n(n-1)\\
\geq &[(n^2+10n-8)M^2-(n-1)(n-2)(n-4)N][P^2-(n-1)Q],\end{split}
\end{align}
where $M, N, P,$ and $Q$ are given by \eqref{zzh001}.

From the proof of Theorem \ref{t05}, we have $N\geq \frac{M^2}{n+1}$
and $Q\geq \frac{P^2}{n+1}$. Hence, in order to prove inequality
\eqref{52}, we only need to prove the following inequality
\begin{align*}
n(n+2)[MP-(n-1)(n-3)]^2+32n(n-1)\geq \frac{12n(3n-2)}{(n+1)^2}M^2P^2
\end{align*}
or
\begin{align}\begin{split}\label{54}
(n^2+5n-26)&\left[MP-\frac{(n+2)(n-3)(n+1)^2}{n^2+5n-26}\right]^2\\&-\frac{4(3n-10)^2(n+1)^4}{n^2+5n-26}\geq
0.\end{split}
\end{align}
When $n\geq 4(n\in N)$, it's clear that
$(n+1)^2>\frac{(n+2)(n-3)(n+1)^2}{n^2+5n-26}$, and the function
$$f(x)=(n^2+5n-26)\left[x-\frac{(n+2)(n-3)(n+1)^2}{n^2+5n-26}\right]^2-\frac{4(3n-10)^2(n+1)^4}{n^2+5n-26}$$
is increasing on interval $[(n+1)^2, +\infty)$. Thus, from $MP\geq
(n+1)^2$, we get
\begin{align*}
f(MP)\geq f((n+1)^2)=0.
\end{align*}
It is just as inequality \eqref{54}. Further, inequality \eqref{52}
or \eqref{05} holds. \par The proof of Theorem \ref{t06} is thus
completed.
\end{enumerate}
\end{proof}

\section{Acknowledgements}
The authors would like to thank Professor M. Hajja for his kindly
help in sending his valuable paper to them.


\begin{thebibliography}{7}
\bibitem{hajja01} M. Hajja, \textit{Coincidences of Centers of Edge-Incentric,
or Balloon, Simplices}, Results Math. \textbf{49} (2006), 237--263.

\bibitem{ivan01} V. F. Ivanoff, \textit{The circumradius of
a simplex}. Math. Mag. \textbf{43} (1970), 71--72.

\bibitem{lzc01} Z.-Ch. Lin, \textit{The Tangent Sphere of the Edge in $n-$dimension
Simplex}, Math. Practice Theory. \textbf{4} (1995), 90--93. (in
Chinese)

\bibitem{lz01} Z.-Ch. Lin and H.-F. Zhu, \textit{Research in the Tangent Sphere of the Edge in the
Tetrahedron}. Geometric Inequalities in China. Jiangsu Educational
Press, Jiangsu Province, China, 1996, 175--187. (in Chinese)

\bibitem{wx01} W.-X. Shen, \textit{Introducition to Simplices}. Hunan Normal University Press,
Changsha, Hunan Province, China, 2000. (in Chinese)

\bibitem{wz01} Y.-D. Wu and Zh.-H. Zhang, \textit{The Edge-Tangent Sphere of a Circumscriptible
Tetrahedron}, Forum Geom. \textbf{7} (2007), 19--24.

\bibitem{wzw01} Y.-D. Wu, Zh.-H. Zhang  and Zh.-G. Wang, \textit{On a Special Class
of $n$-Simplices}, Results Math. \textbf{54} (2009), 389--396.

\end{thebibliography}
\end{document}